\documentclass[a4paper,reqno,11pt,oneside]{amsart}
\usepackage{graphicx}
\usepackage{xcolor}
\usepackage{amssymb}
\usepackage{amstext}
\usepackage{amsmath}
\usepackage{amscd}
\usepackage{amsthm}
\usepackage{amsfonts}
\usepackage{enumerate}
\usepackage{caption}
\usepackage{here}
\usepackage{bm}
\usepackage{tikz}
\usepackage{calc}
\usepackage{multirow}
\usepackage{makecell}
\usepackage[colorinlistoftodos]{todonotes}
\usepackage[hidelinks]{hyperref}
\usetikzlibrary{decorations.markings}
\usetikzlibrary{positioning}
\tikzstyle{vertex}=[circle ,draw, inner sep=0pt, minimum size=6pt]

\usepackage{comment}
\usepackage[whole]{bxcjkjatype}

\makeatletter
  
  \@addtoreset{equation}{section}
\makeatother

\newcommand{\Sc}{\mathcal{S}}

\newcommand{\ZZ}{\mathbb{Z}}

\newcommand{\RR}{\mathbb{R}}

\def\opn#1#2{\def#1{\operatorname{#2}}} 
\opn\conv{conv} \opn\dep{depth} \opn\Spec{Spec} \opn\cone{cone} \opn\ini{in} \opn\codeg{codeg} \opn\deg{deg}
\opn\Graph{Graph} \opn\sign{sign} \opn\Ehr{Ehr} \opn\rank{rank} \opn\type{type} \opn\reg{reg} \opn\core{core}
\opn\Hilb{Hilb} \opn\Indeg{Indeg} \opn\link{link} \opn\Tor{Tor} \opn\MNF{MNF} \opn\Stab{Stab} \opn\Vol{Vol}

\newtheorem{thm}{Theorem}[section]
\newtheorem{cor}[thm]{Corollary}
\newtheorem{lem}[thm]{Lemma}

\theoremstyle{definition}

\newtheorem{ex}[thm]{Example}

\theoremstyle{remark}

\begin{document}

\title{Ehrhart polynomials of cyclic polytopes as averages of zonotope Ehrhart polynomials}
\author{Masato Konoike}

\address{Department of Pure and Applied Mathematics, Graduate School of Information Science and Technology, The University of Osaka, Suita, Osaka 565-0871, Japan}
\email{kounoike-m@ist.osaka-u.ac.jp}

\subjclass{Primary 52B20; Secondary 05A15} 
\keywords{Cyclic polytope, lattice zonotope, Ehrhart polynomial, magic positivity, real-rootedness}

\begin{abstract}
We prove an averaging formula for the Ehrhart polynomial of a cyclic polytope whose vertices are given by integer parameters on the moment curve. More precisely, its Ehrhart polynomial is the average of the Ehrhart polynomials of an explicitly constructed family of lattice zonotopes. Since lattice zonotopes are magic positive, this formula implies magic positivity for these cyclic polytopes. Consequently, their $h^\ast$-polynomials are real-rooted, and their $h^\ast$-vectors are log-concave and unimodal.
\end{abstract}

\maketitle

\section{Introduction}\label{sec:introduction}

Let $P\subset\RR^d$ be a $d$-dimensional lattice polytope. Its Ehrhart polynomial $E_P(m)$ is characterized by
\[
E_P(m)=|mP\cap\ZZ^d| \qquad (m\in\ZZ_{>0}),
\]
and its $h^\ast$-polynomial is defined by
\[
1 + \sum_{m\ge1}E_P(m)z^m=\frac{h_P^\ast(z)}{(1-z)^{d+1}}.
\]
Following Ferroni and Higashitani~\cite{ferroni2024examples}, a polynomial $f$ of degree $d$ is called \emph{magic positive} if
\begin{equation}\label{eq:magic-basis} f(x)=\sum_{i=0}^d a_i x^i(1+x)^{d-i},\qquad a_i \ge 0. 
\end{equation}
The coefficients $a_i$ are called the \emph{magic coefficients} of $f$.

A theorem of Br\"anden implies, in the Ehrhart setting, that magic positivity of $E_P$ implies the real-rootedness of $h_P^\ast$; see~\cite{branden2006linear} and \cite[Theorem~4.19]{ferroni2024examples}. Since $h_P^\ast$ has nonnegative coefficients, its coefficient sequence is then log-concave and unimodal.

Magic positivity is known for several families of lattice polytopes; see, e.g., \cite{athanasiadis2026lattice,avila2026luck,beck2019h,ferroni2024examples,hill2026lattice,konoike2024new,konoike2025magic,liu2026magic}. For cyclic polytopes in the integral moment-curve realization, Liu~\cite{liu2005ehrhart} proved an explicit formula for their Ehrhart coefficients. In particular, these cyclic polytopes are Ehrhart positive.

We prove the stronger statement that the Ehrhart polynomial of $C_d(T)$ is the average of the Ehrhart polynomials of an explicitly constructed family of lattice zonotopes. Since lattice zonotopes are magic positive, this averaging identity immediately implies magic positivity for $C_d(T)$.

We consider the moment-curve realization
\[
\nu_d(t)=(t,t^2,\ldots,t^d),\qquad C_d(T)=\conv\{\nu_d(t):t\in T\},
\]
where $T=\{t_0<\cdots<t_n\}\subset\ZZ$ and $1\le d\le n$. We write $\Vol_j$ for $j$-dimensional Euclidean volume. Thus, if $p_0,\ldots,p_j\in\RR^j$ are affinely independent, then
\[
\Vol_j(\conv\{p_0,\ldots,p_j\}) =\frac{1}{j!}\left|\det
    \begin{pmatrix}
    1&\cdots&1\\
    p_0&\cdots&p_j
    \end{pmatrix}
    \right|.
\]
We set
\[
v_0(T)=1,\qquad v_j(T)=\Vol_j(C_j(T))\quad(1\le j\le n).
\]
Liu~\cite[Theorem~1.2]{liu2005ehrhart} proved that 
\begin{equation}\label{eq:liu}
    E_{C_d(T)}(m)=\sum_{j=0}^d v_j(T)m^j.
\end{equation}
If $a_i(T,d)$ denotes the $i$-th magic coefficient, then the change of basis gives
\begin{equation}\label{eq:inverse-basis}
    a_i(T,d)=\sum_{j=0}^i(-1)^{i-j}\binom{d-j}{i-j}v_j(T).
\end{equation}
Formula \eqref{eq:inverse-basis} contains alternating signs and therefore does not directly determine the signs of the magic coefficients. Instead, we prove an averaging identity involving Ehrhart polynomials of lattice zonotopes.

Write $[n]=\{1,\ldots,n\}$ and define
\[
g_i=t_i-t_{i-1},\qquad I_i=\{t_{i-1},t_{i-1}+1,\ldots,t_i-1\}\quad(i\in[n]).
\]
Let
\[
\Sc=\prod_{i=1}^n I_i, \qquad G=|\Sc|=\prod_{i=1}^n g_i.
\]
Recall that a lattice zonotope in $\RR^d$ is a Minkowski sum of line segments $[0,a_1]+\cdots+[0,a_m]$ with $a_1,\ldots,a_m\in\ZZ^d$. For $\mathbf s=(s_1,\ldots,s_n)\in\Sc$, set 
\begin{equation}\label{eq:zonotope}
    Z_d(\mathbf s)=\sum_{i=1}^n[0,g_i u_d(s_i)],\qquad u_d(s)=(1,s,\ldots,s^{d-1}).
\end{equation}
Since $s_1<\cdots<s_n$, every set of $d$ generators is linearly independent by the Vandermonde determinant. Thus $Z_d(\mathbf s)$ is a $d$-dimensional lattice zonotope.

Our main result is the following averaging identity.

\begin{thm}\label{thm:average}
For every $T=\{t_0<\cdots<t_n\}\subset\ZZ$ and $1\le d\le n$,
\begin{equation}\label{eq:average}
    E_{C_d(T)}(m)=\frac{1}{G}\sum_{\mathbf s\in\Sc}E_{Z_d(\mathbf s)}(m).
\end{equation}
\end{thm}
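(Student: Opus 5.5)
Both sides are polynomials in $m$, so I will compare coefficients of $m^j$ for each $0\le j\le d$. By Liu's formula \eqref{eq:liu}, the left side has $m^j$-coefficient $v_j(T)=\Vol_j(C_j(T))$. For the right side I use the standard Ehrhart formula for lattice zonotopes (Stanley): for $Z=\sum_i[0,w_i]$ with $w_i\in\ZZ^d$,
\[
E_Z(m)=\sum_{X}h(X)\,m^{|X|},
\]
where $X$ runs over linearly independent subsets of the generators and $h(X)$ is the gcd of the maximal minors of the matrix with columns $X$. In $Z_d(\mathbf s)$ any $j\le d$ generators are independent. So the $m^j$-coefficient of the right side of \eqref{eq:average} is
\[
\frac1G\sum_{\mathbf s\in\Sc}\ \sum_{\substack{A\subseteq[n]\\|A|=j}}\Bigl(\prod_{i\in A}g_i\Bigr)\,h\bigl(\{u_d(s_i)\}_{i\in A}\bigr).
\]

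The first key step is to compute $h(\{u_d(s_i)\}_{i\in A})$ for distinct integers $s_{a_1}<\dots<s_{a_j}$. The $j\times j$ minor using the first $j$ rows is the Vandermonde $V(s_A)=\prod_{k<l}(s_{a_l}-s_{a_k})$. I claim every maximal minor is an integer multiple of it. Indeed, a minor using rows $\lambda$ equals $V(s_A)$ times a Schur polynomial in the $s$'s, which has integer coefficients. Hence the gcd is exactly $|V(s_A)|$. The contribution of $A$ is then $\prod_{i\in A}g_i\,V(s_A)$. Averaging over $\Sc$, the coordinates outside $A$ cancel against the corresponding factors of $G$, leaving
\[
\sum_{\substack{A\subseteq[n]\\|A|=j}}\ \sum_{s_i\in I_i\ (i\in A)} V(s_A).
\]

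The second step is to identify this with $v_j(T)$, and I expect this to be the main obstacle. I would use the integral representation of the volume of a cyclic polytope, which generalizes $\Vol=\prod_{k<l}(t_l-t_k)/\ldots$. Via the Schur/Gale evenness triangulation, $C_j(T)$ is triangulated by simplices, and each simplex has volume $\frac{1}{j!}\cdot$ a Vandermonde. Alternatively, and more cleanly, I would prove by induction (Liu-style) the discrete identity
\[
v_j(T)=\sum_{i_1<\dots<i_j}\ \sum_{s_k\in I_{i_k}}\prod_{k<l}(s_l-s_k).
\]
The route is as follows. Note that $\Vol_j(C_j(T))$ is given by $\frac{1}{j!}\sum$ over a triangulation. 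Its continuous analogue is $\int_{t_0<x_1<\dots<x_j<t_n}\prod(x_l-x_k)\,dx$ only up to correction terms. The discrete analogue follows from the known identity in Liu's paper, which expresses $v_j(T)$ through the Vandermonde-sum $\sum_{0\le i_0<\dots<i_j\le n}$ over simplices in the triangulation. Finally, I would convert sums over simplices $\{t_{i_0},\dots,t_{i_j}\}$ into sums over $s_k\in I_{i_k}$ using telescoping/discrete integration in each variable. Concretely, for fixed others, $\sum_{s\in I_i}$ of a polynomial is a difference of antiderivative values at $t_{i-1},t_i$, and the divided-difference structure of the Vandermonde makes this work.

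The hardest part is this last matching. If it resists direct telescoping, the fallback is to induct on $n$. Adding a point $t_{n+1}$ changes $v_j$ by the volume of $\conv(C_j(T)\cup\{\nu_j(t_{n+1})\})\setminus C_j(T)$, which is a pyramid union over the visible facets (by Gale evenness). I would then match this increment with the new terms where $n+1\in A$, reducing to $j-1$ via a determinant expansion.
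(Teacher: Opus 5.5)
Your treatment of the zonotope side is correct and matches the paper: you compare coefficients, apply Stanley's formula, show that the gcd of maximal minors is the Vandermonde (your Schur-polynomial argument is equivalent to the paper's divisibility of alternating polynomials), and average out the coordinates outside $A$. This reduces the theorem to the identity
\[
v_j(T)=\sum_{\substack{A\subseteq[n]\\|A|=j}}\ \sum_{(s_i)_{i\in A}\in\prod_{i\in A}I_i}\Delta(s_i:i\in A),
\]
which is Lemma~\ref{lem:discrete-volume} and is the real content of the theorem. Here there is a genuine gap. You sketch several routes but carry none of them far enough to close.

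In the triangulation route, the sum must run over the simplices of a triangulation; summing over all $0\le i_0<\cdots<i_j\le n$ overcounts. For a single simplex with integer vertices $x_0<\cdots<x_j$, the correct telescoping is
\[
\Delta(x_0,\ldots,x_j)=\det\bigl(\nu_j(x_1)-\nu_j(x_0),\ldots,\nu_j(x_j)-\nu_j(x_{j-1})\bigr)=j!\sum_{x_{k-1}\le s_k<x_k}\Delta(s_1,\ldots,s_j).
\]
So for the simplex on $t_{i_0}<\cdots<t_{i_j}$, each $s_k$ ranges over $I_{i_{k-1}+1}\cup\cdots\cup I_{i_k}$, not over $I_{i_k}$. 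To reach the target you would still have to prove that every tuple $a_1<\cdots<a_j$ in $[n]$ is interlaced (that is, $i_{k-1}<a_k\le i_k$ for all $k$) by exactly one simplex of your triangulation. That is a nontrivial combinatorial fact about triangulations of $C_j(T)$, and the proposal never addresses it.

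The fallback induction on $n$ has the same hole. Telescoping the pyramids $\conv(F\cup\{\nu_j(t_{n+1})\})$ over the visible facets $F$ reproduces exactly the new terms with $n+1\in A$ only if those facets interlace each $(j-1)$-tuple exactly once. ``A determinant expansion'' does not supply this.

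The missing idea is the one the paper uses, which avoids triangulations entirely. Nudel'man's edge formula (Lemma~\ref{lem:edge-volume}) applies because all Vandermonde determinants of the $t_i$ are positive. It gives $j!\,v_j(T)=\sum_{i_1<\cdots<i_j}\det(e_{i_1},\ldots,e_{i_j})$ with $e_i=\nu_j(t_i)-\nu_j(t_{i-1})$. Next, $\nu_j(s+1)-\nu_j(s)=M_ju_j(s)$ with $M_j$ lower triangular and $\det M_j=j!$, so $e_i=M_j\sum_{s\in I_i}u_j(s)$. Multilinearity then yields $\det(e_{i_1},\ldots,e_{i_j})=j!\sum_{s_k\in I_{i_k}}\Delta(s_1,\ldots,s_j)$, which is exactly the needed identity. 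Without this edge-determinant volume formula, or a proof of the exactly-once interlacing property for some specific triangulation, your argument does not establish the theorem.
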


Theorem~\ref{thm:average}, together with the magic positivity of lattice zonotopes~\cite{beck2019h}, yields the following corollary.

\begin{cor}\label{cor:magic}
For every finite $T\subset\ZZ$ and $1\le d<|T|$, all magic coefficients $a_i(T,d)$ are nonnegative integers. Moreover, $h^\ast_{C_d(T)}(z)$ is real-rooted, and its coefficient sequence is log-concave and unimodal.
\end{cor}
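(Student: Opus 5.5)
The plan is to derive everything from Theorem~\ref{thm:average} together with linearity of the magic expansion. Write $T=\{t_0<\cdots<t_n\}$, so that $1\le d\le n$. For each $\mathbf s\in\Sc$, the zonotope $Z_d(\mathbf s)$ is a $d$-dimensional lattice zonotope. By \cite{beck2019h}, $E_{Z_d(\mathbf s)}$ is magic positive, i.e.
\[
E_{Z_d(\mathbf s)}(m)=\sum_{i=0}^d a_i(\mathbf s)\,m^i(1+m)^{d-i}\quad\text{with } a_i(\mathbf s)\ge 0 .
\]
Since every $E_{Z_d(\mathbf s)}$ has degree exactly $d$, all these expansions use the same basis $\{m^i(1+m)^{d-i}\}_{0\le i\le d}$. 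The same holds for $E_{C_d(T)}$, because its degree is $d$ by \eqref{eq:liu}. Because the magic coefficients are unique, averaging \eqref{eq:average} termwise gives
\[
a_i(T,d)=\frac1G\sum_{\mathbf s\in\Sc}a_i(\mathbf s)\ge 0 .
\]

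For integrality, I would not use the averaging at all. Instead I would use \eqref{eq:inverse-basis}, which expresses each $a_i(T,d)$ as an integer combination of $v_0(T),\dots,v_d(T)$. It therefore suffices to show that each $v_j(T)$ is an integer.

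To do this, I would triangulate $C_j(T)$ into simplices spanned by $j+1$ points $\nu_j(t_{k_0}),\dots,\nu_j(t_{k_j})$. Any triangulation of $C_j(T)$ using only its vertices will do, for instance a pulling triangulation. The volume of such a simplex is
\[
\frac{1}{j!}\prod_{a<b}\bigl(t_{k_b}-t_{k_a}\bigr),
\]
by the Vandermonde determinant. It is classical that the Vandermonde product of $j+1$ integers is divisible by the superfactorial $\prod_{r=1}^{j} r!$, since the product divided by this superfactorial equals $\prod_{a<b}\frac{t_{k_b}-t_{k_a}}{b-a}$, which is the dimension of a $\mathrm{GL}$-representation, or equivalently an integer by a binomial-determinant argument. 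In particular the product is divisible by $j!$. Hence each simplex has integer volume, and so $v_j(T)\in\ZZ$. This divisibility is the only nontrivial arithmetic input, and I expect it to be the main step to check carefully.

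Finally, since $E_{C_d(T)}$ is magic positive, Brändén's theorem in the form \cite[Theorem~4.19]{ferroni2024examples} gives that $h^\ast_{C_d(T)}$ is real-rooted. Its coefficients are nonnegative, as for every lattice polytope. Real-rootedness of a polynomial with nonnegative coefficients implies, by Newton's inequalities, that the coefficient sequence is log-concave. It has no internal zeros, since $h^\ast_0=1$ and the nonzero coefficients of a lattice polytope's $h^\ast$-polynomial form an initial segment. Therefore the sequence is also unimodal.
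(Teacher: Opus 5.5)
Your magic-positivity step (averaging the unique magic coefficients termwise in \eqref{eq:average}) and your real-rootedness step via \cite[Theorem~4.19]{ferroni2024examples} are exactly the paper's. The genuinely different step is integrality of $v_j(T)$. The paper gets this for free from Lemma~\ref{lem:discrete-volume}, which writes $v_j(T)$ as a sum of Vandermonde products of $j$ integers; the factor $1/j!$ has already been cancelled by $\det M_j=j!$. You instead triangulate $C_j(T)$ into simplices with vertices among the points $\nu_j(t)$, $t\in T$. You then use the classical divisibility of a Vandermonde product of $j+1$ integers by $\prod_{r=1}^{j}r!$, hence by $j!$. Your argument is correct and self-contained: it works directly from the definition of $v_j(T)$ as a volume, without Nudel'man's formula or the finite-difference computation. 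The paper's route costs nothing extra once Lemma~\ref{lem:discrete-volume} is in place for the main theorem.

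There is, however, a false statement in your last paragraph. It is not true that the nonzero coefficients of a lattice polytope's $h^\ast$-polynomial always form an initial segment. The Reeve tetrahedron $\conv\{0,e_1,e_2,e_1+e_2+re_3\}$ has $h^\ast(z)=1+(r-1)z^2$, which has an internal zero for $r\ge 2$. This matters because log-concavity alone does not imply unimodality when internal zeros are present; for example, $(1,0,0,1)$ is log-concave but not unimodal.

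The correct justification follows from what you have already proved. The polynomial $h^\ast_{C_d(T)}$ has nonnegative coefficients and $h^\ast_0=1$, so it has no roots in $[0,\infty)$. Since it is real-rooted, it factors as $\prod_{k=1}^{s}(1+r_kz)$ with all $r_k>0$, where $s=\deg h^\ast_{C_d(T)}$. Hence every coefficient in degrees $0,\dots,s$ is strictly positive. With no internal zeros, Newton's inequalities give log-concavity and then unimodality; the paper's proof uses this implicitly.
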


\section{Proof of the averaging identity}\label{sec:average-proof}

\subsection{Volumes and finite differences}

We use the following volume formula of Nudel'man \cite{nudel1975isoperimetric}.

\begin{lem}[{\cite[\S3, Theorem, pp.~282--283]{nudel1975isoperimetric}}]\label{lem:edge-volume}
Let $x_0,\ldots,x_n\in\RR^j$, where $1 \le j \le n$, satisfy
\[
\det
\begin{pmatrix}
1 & \cdots & 1\\
x_{i_0} & \cdots & x_{i_j}
\end{pmatrix}
>0
\qquad
(0\le i_0<\cdots<i_j\le n).
\]
If $e_i=x_i-x_{i-1}$, then
\[
j!\Vol_j(\conv\{x_0,\ldots,x_n\})
=
\sum_{1\le i_1<\cdots<i_j\le n}
\det(e_{i_1},\ldots,e_{i_j}).
\]
\end{lem}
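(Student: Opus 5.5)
We would prove Lemma~\ref{lem:edge-volume} by induction on $n$ with $j$ fixed, starting from $n=j$. Write $P_k=\conv\{x_0,\ldots,x_k\}$ and $S_k=\sum_{1\le i_1<\cdots<i_j\le k}\det(e_{i_1},\ldots,e_{i_j})$.

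\emph{Base case $n=j$.} Subtract the first column from all the others and then subtract consecutive columns. This gives
\[
\det\begin{pmatrix}1&\cdots&1\\ x_0&\cdots&x_j\end{pmatrix}=\det(e_1,\ldots,e_j).
\]
The hypothesis says this determinant is positive. Hence $j!\Vol_j(P_j)=\det(e_1,\ldots,e_j)$, which is the single term of $S_j$.

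\emph{Inductive step.} We must show
\[
j!\bigl(\Vol_j(P_n)-\Vol_j(P_{n-1})\bigr)=S_n-S_{n-1}=\sum_{1\le i_1<\cdots<i_{j-1}\le n-1}\det(e_{i_1},\ldots,e_{i_{j-1}},e_n).
\]
\begin{enumerate}[(1)]
\item Every $(j+1)$-subset has positive orientation, so the oriented matroid of $x_0,\ldots,x_n$ is alternating. Thus $P_{n-1}$ is combinatorially the cyclic polytope, every point is a vertex, and Gale's evenness criterion describes the facets together with the side on which each remaining point lies.
\item Using this, we identify the facets $F$ of $P_{n-1}$ visible from $x_n$. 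They are the facets whose vertex set, with the index $n$ adjoined, is an oriented $j$-subset of the required sign. Then $P_n\setminus P_{n-1}$ is the union of the pyramids $\conv(F\cup\{x_n\})$ over visible $F$, and each pyramid contributes a positive orientation determinant.
\item It remains to show that the sum of these facet determinants equals the displayed sum. We write $x_n=x_{n-1}+e_n$ and view the map $v\mapsto\sum_{i_1<\cdots<i_{j-1}\le n-1}\det(e_{i_1},\ldots,e_{i_{j-1}},v)$ as a $(j-1)$-dimensional edge-volume expression. Concretely, it equals $(j-1)!$ times the volume of the projection of $P_{n-1}$ along $e_n$, multiplied by the length factor in the $e_n$ direction.
\end{enumerate}

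An alternative for step (3) is to apply the induction hypothesis in dimension $j-1$. We would take the quotient $\RR^j/\RR e_n$ and note that the alternating condition descends to the projected points $\bar x_0,\ldots,\bar x_{n-1}$. This is the same computation as the classical fact that the volume of the ``shadow cap'' $\conv(P\cup\{p+v\})\setminus P$ is controlled by the projection of $P$ along $v$. Here that shadow cap is exactly the union of pyramids over the facets visible from $x_n$.

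If the combinatorial bookkeeping in step (2) becomes unwieldy, there is a homotopy fallback. Move the last point as $x_{n-1}+te_n$ for $t\in[0,1]$, which keeps the alternating condition along the path. Then differentiate the volume in $t$: the derivative is $\frac1j\sum_F\det(\ldots)$ over the currently visible facets, and this would be matched with the $(j-1)$-dimensional formula.

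\emph{Main obstacle.} The hardest step is (2)–(3): checking that the facets of $P_{n-1}$ visible from $x_n$ contribute exactly the terms $\det(e_{i_1},\ldots,e_{i_{j-1}},e_n)$ with no leftover cancellation. We would first verify this for $j=2$, where the visible edges form a contiguous chain of the convex polygon, and then argue in general through the projection along $e_n$, using induction on $j$.
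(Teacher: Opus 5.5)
The paper does not prove this lemma; it quotes it from Nudel'man. So your argument has to stand on its own, and it fails at step~(3). The base case and the reduction to the cap $P_n\setminus P_{n-1}$ are fine. The trouble is that the formula you propose for the cap is false, and alternation does not descend to the projection along $e_n$.

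Take $j=2$, $n=4$ and $x_0=(0,0)$, $x_1=(1,0)$, $x_2=(1,1)$, $x_3=(0,1)$, $x_4=(-\tfrac12,\tfrac12)$. These points are in convex position in counterclockwise order, so all ten triples are positively oriented. Then $e_4=(-\tfrac12,-\tfrac12)$, and $\sum_{i\le 3}\det(e_i,e_4)=\det(x_3-x_0,e_4)=\tfrac12$, which is indeed $2\Vol_2(\conv\{x_0,x_3,x_4\})$. But $P_3$ is the unit square, whose projection along $e_4$ has length $\sqrt2$, and $|e_4|\sqrt2=1\neq\tfrac12$.

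The projected points, with coordinate $\langle x,(1,-1)\rangle/\sqrt2$, are $0,\tfrac1{\sqrt2},0,-\tfrac1{\sqrt2}$. This sequence is not monotone, so the $(j-1)$-dimensional case cannot be applied to it. The ``classical fact'' you invoke is the one for the Minkowski sum $P+[0,v]$. For $\conv(P\cup\{p+v\})$ only the facets visible from $p+v$ count. A facet whose outer normal pairs positively with $e_n$ need not be visible from $x_n$; here the bottom edge $[x_0,x_1]$ is such a facet.

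What is true is the following. By Gale evenness, every facet $F$ of $P_{n-1}$ visible from $x_n$ contains $x_{n-1}$: if $n-1\notin F$, then $x_{n-1}$ and $x_n$ lie on the same side of $\operatorname{aff}F$. Hence $j!\Vol_j(P_n\setminus P_{n-1})=(j-1)!\,|e_n|\Vol_{j-1}(\pi V)$, where $\pi$ is projection along $e_n$ and $V$ is the union of the visible facets only. You would still have to prove that $\sum_{i_1<\cdots<i_{j-1}\le n-1}\det(e_{i_1},\ldots,e_{i_{j-1}},e_n)$ equals this quantity. Since $\pi V$ is not the convex hull of an alternating configuration, induction on $j$ does not give it. Your homotopy fallback leads to exactly the same identity. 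So the real content of the lemma is missing.

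A route that does work avoids projections altogether. In $\sum e_{i_1}\wedge\cdots\wedge e_{i_j}$, summing over $i_1<i_2$ gives $(x_{i_2-1}-x_0)\wedge(x_{i_2}-x_0)$. The remaining sum over $i_3<\cdots<i_j$ is again an edge sum, for the chain $x_{i_2},\ldots,x_n$, so you can induct on $j$ in steps of two, with base case $\sum_i e_i=x_n-x_0$. This rewrites the edge sum as $\sum_F\det(x_{f_1}-x_0,\ldots,x_{f_j}-x_0)$. Here $F=\{f_1<\cdots<f_j\}$ runs over unions of disjoint consecutive pairs $\{a-1,a\}$ with $a\ge2$, together with $\{n\}$ when $j$ is odd. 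By Gale evenness these are exactly the facets of $P_n$ not containing $x_0$. Each term is positive by hypothesis and equals $j!\Vol_j(\conv(\{x_0\}\cup F))$. These simplices form the pulling triangulation of the simplicial polytope $P_n$ from $x_0$, which gives the lemma directly.
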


For $s_1<\cdots<s_j$, write
\[
\Delta(s_1,\ldots,s_j)=\prod_{1\le a<b\le j}(s_b-s_a).
\]
We set $\Delta(\emptyset)=1$; for $j=1$, the defining product is empty and therefore equals $1$.

\begin{lem}\label{lem:discrete-volume}
For $0\le j\le n$,
\begin{equation}\label{eq:discrete-volume}
    v_j(T)=\sum_{\substack{J\subseteq[n]\\|J|=j}} \ \sum_{(s_i)_{i\in J}\in\prod_{i\in J}I_i} \Delta(s_i:i\in J),
\end{equation}
where the arguments of $\Delta$ are listed in increasing index order.
\end{lem}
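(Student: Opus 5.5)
We will prove Lemma~\ref{lem:discrete-volume} by applying Lemma~\ref{lem:edge-volume} to the points $x_i=\nu_j(t_i)\in\RR^j$ and then expanding each edge determinant into a sum over integer points via a discrete integration. The case $j=0$ is trivial: both sides equal $1$, the only term being $J=\emptyset$ with $\Delta(\emptyset)=1$. For $j\ge1$, the hypothesis of Lemma~\ref{lem:edge-volume} is the Vandermonde positivity
\[
\det\begin{pmatrix}1&\cdots&1\\ \nu_j(t_{i_0})&\cdots&\nu_j(t_{i_j})\end{pmatrix}=\prod_{0\le a<b\le j}(t_{i_b}-t_{i_a})>0 ,
\]
which holds because $t_0<\cdots<t_n$. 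Lemma~\ref{lem:edge-volume} then gives
\[
j!\,v_j(T)=\sum_{|J|=j}\det(e_{i_1},\ldots,e_{i_j}),\qquad e_i=\nu_j(t_i)-\nu_j(t_{i-1}).
\]

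The key step is to write each edge vector as a sum over $I_i$. Set $D(s)=\nu_j(s+1)-\nu_j(s)$. Telescoping gives $e_i=\sum_{s\in I_i}D(s)$. Since the determinant is multilinear in its columns,
\[
\det(e_{i_1},\ldots,e_{i_j})=\sum_{s_{i_1}\in I_{i_1},\ldots,s_{i_j}\in I_{i_j}}\det\bigl(D(s_{i_1}),\ldots,D(s_{i_j})\bigr).
\]
Comparing with the statement, it then suffices to prove the identity
\[
\det\bigl(D(s_1),\ldots,D(s_j)\bigr)=j!\,\Delta(s_1,\ldots,s_j)
\]
for all integers $s_1,\ldots,s_j$. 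This single identity would settle the lemma.

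This determinant identity is the main obstacle. The entries are $D(s)_k=(s+1)^k-s^k$, a polynomial in $s$ of degree $k-1$ with leading coefficient $k$. Since $(s+1)^k-s^k=\sum_{l<k}\binom{k}{l}s^l$, the matrix with rows indexed by $k$ equals $L\cdot V$. Here $L$ is lower triangular with diagonal entries $k=1,\ldots,j$, and $V=(s_c^{\,l})_{0\le l\le j-1,\,c}$ is the Vandermonde matrix with columns $u_j(s_c)$. Hence the determinant equals $j!\det V=j!\,\Delta(s_1,\ldots,s_j)$, since the $s_c$ appear in increasing index order. Equivalently, one performs row operations that remove lower-degree terms, which is the cleanest way to present it. This identity is also where the zonotope generators $u_d(s)$ come from, which is consistent with Theorem~\ref{thm:average}.

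Dividing the resulting expression for $j!\,v_j(T)$ by $j!$ gives \eqref{eq:discrete-volume}. Because $I_{i}$ lies strictly to the left of $I_{i'}$ whenever $i<i'$, the chosen $s_i$ automatically increase with the index. Therefore each $\Delta$ is positive, and no absolute values or sign issues arise.
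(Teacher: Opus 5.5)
Your proposal is correct and matches the paper's proof essentially step for step. Both apply Nudel'man's formula to the points $\nu_j(t_i)$ after checking Vandermonde positivity. Both telescope each edge vector over $I_i$ and expand by multilinearity. Both factor $\nu_j(s+1)-\nu_j(s)$ as a lower-triangular matrix times $u_j(s)$; your $L$ is the paper's $M_j$, with diagonal $1,\ldots,j$, so the determinant becomes $j!\,\Delta$ and the $j!$ cancels.
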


\begin{proof}
For $j=0$, the identity is $v_0(T)=1$. Assume $j\ge1$. By the binomial theorem,
\[
\nu_j(s+1)-\nu_j(s)=M_j u_j(s),\qquad 
(M_j)_{k\ell}=
\begin{cases}
    \binom{k}{\ell-1},&\ell\le k,\\
    0,&\ell>k,
\end{cases}
\qquad 1\le k,\ell\le j.
\]
The matrix $M_j$ is lower triangular with diagonal entries $1,2,\ldots,j$, so
\[
\det M_j=j!.
\]
Since $I_i=\{t_{i-1},\ldots,t_i-1\}$, summing these consecutive differences gives
\begin{equation}\label{eq:edge-sum}
    \nu_j(t_i)-\nu_j(t_{i-1}) 
    =\sum_{s\in I_i}\bigl(\nu_j(s+1)-\nu_j(s)\bigr)
    =M_j\sum_{s\in I_i}u_j(s).
\end{equation}

For $0\le i_0<\cdots<i_j\le n$, we have
\[
\det\begin{pmatrix}
    1&\cdots&1\\
    \nu_j(t_{i_0})&\cdots&\nu_j(t_{i_j})
\end{pmatrix}
=\prod_{0\le a<b\le j}(t_{i_b}-t_{i_a})>0.
\]
Thus Lemma~\ref{lem:edge-volume} applies. If $1\le i_1<\cdots<i_j\le n$, then~\eqref{eq:edge-sum} and multilinearity of the determinant give
\begin{align*}
 &\det\bigl(\nu_j(t_{i_1})-\nu_j(t_{i_1-1}),\ldots,
             \nu_j(t_{i_j})-\nu_j(t_{i_j-1})\bigr)\\
 &\qquad=\det(M_j)
   \sum_{\substack{s_1\in I_{i_1},\ldots,s_j\in I_{i_j}}}
   \det\bigl(u_j(s_1),\ldots,u_j(s_j)\bigr)\\
 &\qquad=j!
   \sum_{\substack{s_1\in I_{i_1},\ldots,s_j\in I_{i_j}}}
   \Delta(s_1,\ldots,s_j).
\end{align*}
Here $s_1<\cdots<s_j$ follows from $i_1<\cdots<i_j$ and the definition of the intervals $I_i$. Substituting this expression into
Lemma~\ref{lem:edge-volume} and cancelling the common factor $j!$ gives~\eqref{eq:discrete-volume}.
\end{proof}

\subsection{Arithmetic multiplicities}

For a list of linearly independent vectors $(a_i)_{i\in J}$,
define
\[
\mu(J)=\bigl[\operatorname{span}_{\RR}\{a_i:i\in J\}\cap\ZZ^d:\sum_{i\in J}\ZZ a_i\bigr].
\]
Equivalently, $\mu(J)$ is the greatest common divisor of the absolute values of the nonzero $|J|\times|J|$ minors of the matrix with columns $(a_i)_{i\in J}$. Set $\mu(\emptyset)=1$. Stanley's formula~\cite[Theorem~2.2]{stanley1991zonotope} states that
\begin{equation}\label{eq:stanley}
    E_{\sum_{i=1}^n[0,a_i]}(m)
    =\sum_{\substack{J\subseteq[n]\\(a_i)_{i\in J}\text{ independent}}} \mu(J)m^{|J|}.
\end{equation}
We use the standard fact that every alternating polynomial in $\ZZ[X_1,\ldots,X_j]$ is divisible by the Vandermonde polynomial
\[
\prod_{1\le a<b\le j}(X_b-X_a).
\]

\begin{lem}\label{lem:zonotope-coefficients}
For every $\mathbf s\in\Sc$,
\begin{equation}\label{eq:zonotope-coefficients}
    E_{Z_d(\mathbf s)}(m)= \sum_{\substack{J\subseteq[n]\\|J|\le d}} \left(\prod_{i\in J}g_i\right) \Delta(s_i:i\in J)m^{|J|}.
\end{equation}
\end{lem}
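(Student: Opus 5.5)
We apply Stanley's formula~\eqref{eq:stanley} to the generators $a_i=g_iu_d(s_i)$, $i\in[n]$, of $Z_d(\mathbf s)$. Since $s_1<\cdots<s_n$ and the $s_i$ are distinct, any $d$ of the vectors $u_d(s_i)$ are linearly independent by the Vandermonde determinant. Hence any family $(a_i)_{i\in J}$ with $|J|\le d$ is independent, and a family with $|J|>d$ is not, since it lives in $\RR^d$. So the index set in~\eqref{eq:stanley} is exactly $\{J\subseteq[n]:|J|\le d\}$. It remains to prove
\[
\mu(J)=\Bigl(\prod_{i\in J}g_i\Bigr)\Delta(s_i:i\in J)
\]
for each such $J$. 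Since $\mu$ is an index, it is multiplicative under scaling individual generators: $\mu$ for $(g_iu_d(s_i))_{i\in J}$ equals $\prod_{i\in J}g_i$ times $\mu$ for $(u_d(s_i))_{i\in J}$. The reason is that the lattice $\operatorname{span}_\RR\cap\ZZ^d$ is unchanged, and $\sum\ZZ g_iu_i$ has index $\prod g_i$ in $\sum\ZZ u_i$. The case $J=\emptyset$ is trivial.

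For the unscaled vectors with $J=\{i_1<\cdots<i_j\}$ and $x_k=s_{i_k}$, we use the gcd-of-minors description. Let $V$ be the $d\times j$ matrix whose columns are $u_d(x_k)$, i.e.\ whose rows are $(x_1^r,\ldots,x_j^r)$ for $r=0,\ldots,d-1$. Its top $j\times j$ minor, from rows $0,\ldots,j-1$, equals $\Delta(x_1,\ldots,x_j)$, which is positive. Any other $j\times j$ minor, from rows $r_1<\cdots<r_j$, is an alternating polynomial in $x_1,\ldots,x_j$ with integer coefficients. By the divisibility fact quoted before the lemma, it equals $\Delta(x)$ times an integer polynomial evaluated at integers (a Schur polynomial, in fact). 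So every minor is an integer multiple of $\Delta(x)$, while one minor equals $\Delta(x)$ exactly. The gcd is therefore $\Delta(x_1,\ldots,x_j)$.

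Finally, the arguments are listed in increasing index order, and $s_{i_1}<\cdots<s_{i_j}$ because the intervals $I_i$ are ordered. So $\Delta$ is positive, and it is the absolute value that enters the gcd. Substituting into~\eqref{eq:stanley} gives~\eqref{eq:zonotope-coefficients}.

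The only real point is the gcd computation, which the divisibility fact makes routine. One should also double-check the scaling step: the gcd-of-minors form gives it immediately, because each $j\times j$ minor of the scaled matrix is $\prod_{i\in J}g_i$ times the corresponding minor of $V$.
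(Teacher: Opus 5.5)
Your proposal is correct and follows essentially the same route as the paper. Both arguments apply Stanley's formula and determine the independent subsets via the Vandermonde minor. Both compute $\mu(J)$ as the gcd of maximal minors: the top minor equals $\Delta$, and every other minor is an integer multiple of it by divisibility of alternants. The factor $\prod_{i\in J} g_i$ then comes from scaling the columns. Your extra lattice-index justification of the scaling step is correct but redundant given the gcd-of-minors argument.
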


\begin{proof}
Fix $J=\{i_1<\cdots<i_j\}\subseteq[n]$ with $1\le j\le d$. First consider the $d\times j$ matrix with columns
\[
u_d(s_{i_1}),\ldots,u_d(s_{i_j}).
\]
Its minor in the first $j$ rows is the Vandermonde determinant
\begin{equation}\label{eq:first-minor}
    \Delta(s_{i_1},\ldots,s_{i_j})>0.
\end{equation}

For any choice of $j$ rows, say $1\le r_1<\cdots<r_j\le d$, the corresponding minor is obtained by specializing
\[
f(X_1,\ldots,X_j)=\det\bigl((X_b^{r_a-1})_{1\le a,b\le j}\bigr)
\]
at $X_b = s_{i_b}$. This polynomial is alternating with integer coefficients. Therefore the Vandermonde polynomial
\[
\Delta(X_1,\ldots,X_j)=\prod_{a<b}(X_b-X_a)
\]
divides $f$ in $\ZZ[X_1,\ldots,X_j]$. After substituting $X_b=s_{i_b}$, every $j\times j$ minor is therefore an integer multiple of~\eqref{eq:first-minor}. Since~\eqref{eq:first-minor} itself is one of the minors, the greatest common divisor of all maximal minors is exactly
\[
\Delta(s_{i_1},\ldots,s_{i_j}).
\]

Restoring the factor $g_i$ in each column multiplies every maximal minor by $\prod_{i\in J}g_i$. Consequently,
\[
\mu(J)=\left(\prod_{i\in J}g_i\right)\Delta(s_i:i\in J).
\]
The nonzero minor \eqref{eq:first-minor} shows that every set of at most $d$ generators is linearly independent, while any set of more than $d$ generators is dependent since the generators lie in $\RR^d$. Together with $\mu(\emptyset)=\Delta(\emptyset)=1$, substituting the above multiplicities into \eqref{eq:stanley} gives \eqref{eq:zonotope-coefficients}.
\end{proof}

\subsection{Averaging the coefficients}

\begin{proof}[Proof of Theorem~\ref{thm:average}]
Fix $0\le j\le d$. For $J\subseteq[n]$ with $|J|=j$, write
\[
g_J=\prod_{i\in J}g_i,
\qquad
\Sc_J=\prod_{i\in J}I_i,
\]
and for $\mathbf s_J=(s_i)_{i\in J}\in\Sc_J$ write $\Delta(\mathbf s_J)=\Delta(s_i:i\in J)$, with the arguments ordered by the indices in $J$. For $J=\emptyset$, we use the usual conventions and let $\Sc_{\emptyset}$ consist of one empty tuple.

By Lemma~\ref{lem:zonotope-coefficients}, for each fixed $\mathbf s\in\Sc$, the summand indexed by $J$ in the coefficient $[m^j]E_{Z_d(\mathbf s)}(m)$ is $g_J\Delta(\mathbf s_J)$. It depends only on the coordinates indexed by $J$. For every fixed $\mathbf s_J\in\Sc_J$, the remaining coordinates $(s_i)_{i\notin J}$ can be chosen in exactly
\[
\prod_{i\notin J}g_i=\frac{G}{g_J}
\]
ways. Hence
\begin{align*}
 \frac1G\sum_{\mathbf s\in\Sc}
       g_J\Delta(\mathbf s_J)
 &=\frac1G\frac{G}{g_J}g_J
       \sum_{\mathbf s_J\in\Sc_J}\Delta(\mathbf s_J)\\
 &=\sum_{\mathbf s_J\in\Sc_J}\Delta(\mathbf s_J).
\end{align*}
Summing over all $J\subseteq[n]$ of size $j$ and applying Lemma~\ref{lem:discrete-volume}, we obtain
\[
[m^j]\left(\frac1G\sum_{\mathbf s\in\Sc}E_{Z_d(\mathbf s)}(m)\right)=v_j(T).
\]
By Liu's formula~\eqref{eq:liu}, this is also the coefficient of $m^j$ in $E_{C_d(T)}(m)$. Since this holds for every $0\le j\le d$, identity~\eqref{eq:average} follows.
\end{proof}

\begin{proof}[Proof of Corollary~\ref{cor:magic}]
Taking magic coefficients in the identity of Theorem~\ref{thm:average}, the magic positivity of lattice zonotopes implies that $E_{C_d(T)}(m)$ is magic positive. On the other hand, Lemma~\ref{lem:discrete-volume} expresses each $v_j(T)$ as a finite sum of integer Vandermonde products, and therefore $v_j(T)\in\ZZ$ for every $j$. Hence, by~\eqref{eq:inverse-basis},
\[
a_i(T,d) = \sum_{j=0}^i(-1)^{i-j} \binom{d-j}{i-j}v_j(T)\in\ZZ.
\]
Together with magic positivity, this gives $a_i(T,d)\in\ZZ_{\ge0}$. Real-rootedness of $h^\ast_{C_d(T)}(z)$ follows from \cite[Theorem~4.19]{ferroni2024examples}. 

Since $h^\ast_{C_d(T)}(z)$ has nonnegative coefficients, its real-rootedness implies log-concavity by Newton's inequalities~\cite{stanley1989log}, and hence unimodality.
\end{proof}

\begin{ex}\label{ex:gaps}
Let $d=2$ and $T=\{0,1,3\}$. Then $g_1=1$, $g_2=2$, $I_1=\{0\}$, and $I_2=\{1,2\}$. The two zonotopes are
\[
Z_2(0,1)=[0,(1,0)]+[0,(2,2)],\qquad Z_2(0,2)=[0,(1,0)]+[0,(2,4)].
\]
Their Ehrhart polynomials are $1+3m+2m^2$ and $1+3m+4m^2$, respectively. Their magic coefficients are $(1,1,0)$ and $(1,1,2)$, respectively, so Theorem~\ref{thm:average} gives
\[
E_{C_2(T)}(m)=1+3m+3m^2=(1+m)^2+m(1+m)+m^2.
\]
In this case,
\[
h^\ast_{C_2(T)}(z)=1+4z+z^2.
\]
\end{ex}

\subsection*{Declaration on the use of generative AI}
The author used OpenAI's ChatGPT for exploratory discussions of proof strategies, English-language editing, and \LaTeX{} assistance. All mathematical arguments and references were verified by the author.

\subsection*{Acknowledgements}
The author would like to thank Akihiro Higashitani for carefully reading an earlier version of this manuscript and for valuable comments and suggestions that improved the presentation.

\bibliographystyle{plain} 
\bibliography{ref}

@article{branden2006linear,
  title={On linear transformations preserving the {P{\'o}lya} frequency property},
  author={Br{\"a}nd{\'e}n, Petter},
  journal={Transactions of the American Mathematical Society},
  volume={358},
  number={8},
  pages={3697--3716},
  year={2006}
}

@article{beck2019h,
  title={{$h^\ast$}-polynomials of zonotopes},
  author={Beck, Matthias and Jochemko, Katharina and McCullough, Emily},
  journal={Transactions of the American Mathematical Society},
  volume={371},
  number={3},
  pages={2021--2042},
  year={2019}
}

@article{ferroni2024examples,
  TITLE={Examples and counterexamples in {E}hrhart theory},
  AUTHOR={Ferroni, Luis and Higashitani, Akihiro},
  JOURNAL={EMS Surveys in Mathematical Sciences},
  YEAR={2024}
}

@article{avila2026luck,
  title={Luck and magic for {Pitman--Stanley} polytopes and parking functions},
  author={Avila, Nicolas and Ferroni, Luis and Morales, Alejandro H.},
  journal={arXiv preprint arXiv:2603.19194},
  year={2026}
}

@article{athanasiadis2026lattice,
  title={Lattice point enumeration of some arbor polytopes},
  author={Athanasiadis, Christos A. and Xiao, Qiqi and Yan, Xue},
  journal={arXiv preprint arXiv:2603.11654},
  year={2026}
}

@article{hill2026lattice,
  title={Lattice slices, {Ehrhart} polynomials, and magic positivity of generalized parking-function polytopes},
  author={Hill, Charlie and Luo, Ambrose and Trinh, Vu and Vindas-Mel{\'e}ndez, Andr{\'e}s R.},
  journal={arXiv preprint arXiv:2607.15503},
  year={2026}
}

@article{konoike2024new,
  title={A new class of magic positive {Ehrhart} polynomials of reflexive polytopes},
  author={Konoike, Masato},
  journal={arXiv preprint arXiv:2409.16648},
  year={2024}
}

@article{konoike2025magic,
  title={On the magic positivity of {Ehrhart} polynomials of dilated polytopes},
  author={Konoike, Masato},
  journal={arXiv preprint arXiv:2504.21395},
  year={2025}
}

@article{liu2026magic,
  title={Magic Positivity for the {Ehrhart} Polynomials of Partial Permutohedra},
  author={Liu, Feihu and Zhang, Zihao},
  journal={arXiv preprint arXiv:2607.03854},
  year={2026}
}

@article{liu2005ehrhart,
  title={Ehrhart polynomials of cyclic polytopes},
  author={Liu, Fu},
  journal={Journal of Combinatorial Theory, Series A},
  volume={111},
  number={1},
  pages={111--127},
  year={2005},
  publisher={Elsevier}
}

@article{nudel1975isoperimetric,
  title={Isoperimetric problems for the convex hulls of polygonal lines and curves in multidimensional spaces},
  author={Nudel'man, Adolf Abramovich},
  journal={Mathematics of the USSR-Sbornik},
  volume={25},
  number={2},
  pages={276--294},
  year={1975}
}

@inproceedings{stanley1991zonotope,
  title={A Zonotope Associated with Graphical Degree Sequences.},
  author={Stanley, Richard P.},
  booktitle={Applied geometry and discrete mathematics},
  pages={555--570},
  year={1991}
}

@article{stanley1989log,
  title={Log-Concave and Unimodal Sequences in Algebra, Combinatorics, and Geometry},
  author={Stanley, Richard P.},
  journal={Annals of the {New York} Academy of Sciences},
  volume={576},
  number={1},
  pages={500--535},
  year={1989},
  publisher={Wiley Online Library}
}

\end{document}